\RequirePackage[T1]{fontenc}
\documentclass[twoside,12pt]{amsart}

\usepackage{amsfonts}
\usepackage{amsmath}
\usepackage{amsthm}
\usepackage{amssymb}
\usepackage{bbm}
\usepackage{fancyhdr}
\usepackage{tikz}
\usetikzlibrary{positioning,arrows.meta}
\usepackage{etoolbox}
\usepackage{stmaryrd}
\usepackage[all]{xy}
\usepackage{physics}
\usepackage{tikz-cd}
\usepackage{mathrsfs}
\usepackage{xcolor}
\usepackage{latexsym}
\usepackage[normalem]{ulem}
\usepackage{thm-restate}
\usepackage{bussproofs}

\tikzcdset{row sep/normal=50pt, column sep/normal=50pt}

\usepackage[bookmarks=true,bookmarksdepth=3,bookmarksnumbered=true,colorlinks=false,pdfborder={0 0 0}]{hyperref}

\makeatletter
\newcommand{\SafeTocLink}[3] {
\hyperlink{#1}{#2\hfill #3}
\patchcmd{\l@section}
{\@dottedtocline{1}{0em}{1.5em}{#1}{#2}}{\@dottedtocline{1}{0em}{1.5em}{\SafeTocLink[\@currentHref]}
{#1}{#2}}{} }{}{}
\makeatother

\theoremstyle{definition}
\newtheorem{theorem}{Theorem}[section]

\newtheorem{lemma}[theorem]{Lemma}
\newtheorem{corollary}[theorem]{Corollary}
\newtheorem{proposition}[theorem]{Proposition}
\newtheorem{definition}[theorem]{Definition}

\newtheorem{remark}[theorem]{Remark}

\newcommand{\el}{\mathstrut \mathrm{el} }

\newcommand{\id}{\mathstrut \mathrm{id} }

\newcommand{\dom}{\mathstrut \boldsymbol{\mathrm{dom}} }
\newcommand{\cod}{\mathstrut \boldsymbol{\mathrm{cod}} }

\newcommand{\set}{\mathstrut \boldsymbol{\mathrm{set}} }

\newcommand{\mon}{\mathstrut \boldsymbol{\mathrm{mon}} }
\newcommand{\Pair}{\mathstrut \boldsymbol{\mathrm{Pair}} }
\newcommand{\Had}{\mathstrut \boldsymbol{\mathrm{Had}} }

\newcommand{\coloreduline}[3][red]{
\tikz[baseline=(X.bese)]{\node[innee sep=0pt, outer sep=0pt](X) {\textcolor{#2}{#3}};\draw[#1, line width=0.6pt](X.south west) -- (X.south east); } }

\newcommand{\Thmref}[1]{\hyperref[#1] {\coloreduline[red]{black}{\ref*{#1}}}}

\title{\textbf{CROSSED BURNSIDE RINGS FOR GROUPOIDS}}

\author{KEITARO SHIIZUKA}

\thanks{Department of Mathematics, Kindai University, 3-4-1, Kowakae, Higashi-Osaka, Osaka 577-8502, Japan, Email: keitaro.shizuka.math@gmail.com}

\begin{document}

\begin{abstract}
In this paper, we extend the classical theory of crossed $G$-sets and the crossed Burnside ring from a finite group $G$ to a finite groupoid $\mathcal{G}$. We introduce a natural monoidal structure on the category of crossed $\mathcal{G}$-sets over a $\mathcal{G}$-monoid and construct the corresponding crossed Burnside ring of a $\mathcal{G}$-monoid. Finally, we prove a decomposition theorem that expresses the crossed Burnside ring of a groupoid as a product of crossed Burnside rings of groups. 
\end{abstract}

\maketitle

\textbf{Key words and phrases} : Groupoid, action groupoid, groupoid action, crossed $G$-set, crossed Burnside ring

\tableofcontents

\section{Introduction}
The notion of a groupoid was introduced by H. Brandt in 1926 (see \cite{Bra26}). A groupoid is an algebraic structure that generalizes the concept of a group, but it differs essentially from groups in that not every pair of elements can necessarily be composed. Fundamental notions and theorems in group theory, such as Cayley's theorem, Zassenhaus's theorem have their counterparts in the context of groupoid (see \cite{Iva02}, and \cite{AMP20}).
Several equivalent definitions of a groupoid and its actions are known. In this paper, adopting the categorical viewpoint, we regard a groupoid $\mathcal{G}$ as category whose morphisms are all invertible. In this setting, its action is described by a functor $\mathcal{G}\to \set$.
The crossed Burnside ring was introduced in \cite{Yos97}.
There are several papers on crossed Burnside rings, namely \cite{Yos97}, \cite{OY01}, \cite{Bou03} and \cite{Rog13}. \cite{OY01} provides a detailed discussion of crossed $G$-sets and crossed Burnside rings, including explicit computations for the groups $S_3$, $A_4$, $A_5$.
In \cite{Bou03}, a formula for the decomposition of idempotents in positive characteristic is given. In \cite{Rog13}, the relationship between the crossed Burnside ring and the Mackey algebra is examined.  
In \cite{BD20}, it is shown that for a Mackey 2-functor $M$, there is a one-to-one correspondence between the decomposition of the additive category $M(G)$ (The image of a finite group $G$) and the decomposition of the crossed Burnside ring $B^c(G)$. In recent years, Burnside theory for groupoids has been introduced in \cite{ES23}. However, extensions of the crossed Burnside theory within the framework of groupoids do not appear to have been studied yet. Therefore, the purpose of this manuscript is to develop such an extension of crossed Burnside theory to groupoids. 
In Section $2$, we provide basic definitions and notations. In Section $3$, we define a monoidal structure on the category of crossed $\mathcal{G}$-sets over a fixed $\mathcal{G}$-monoid. Then, we define the crossed Burnside ring with respect to the product induced by this monoidal structure. Next, we compare the monoidal structure of $\set^{\mathcal{G}}$ and the category of crossed $\mathcal{G}$-sets over a fixed $\mathcal{G}$-monoid, and show that the Burnside ring of a groupoid $\mathcal{G}$ namely, the Grothendieck ring of the
monoidal category $\set^{\mathcal{G}}$ can be embedded into the crossed Burnside ring of a groupoid $\mathcal{G}$. Finally, we prove the following Theorem, which states that the crossed Burnside ring of a groupoid can be decomposed into a direct product of crossed Burnside rings of groups.
\setcounter{section}{3}
\setcounter{theorem}{7}
\begin{theorem}
If $\mathcal{G}\cong\coprod^n_{i=1}\mathcal{G}_{[i]}$ is the decomposition of $\mathcal{G}$ into its connected components, then two rings $B^c(\mathcal{G})$ and $\prod_{x\in\pi_0(\mathcal{G})}B^c(\mathcal{G}_x)$ are isomorphic.
\end{theorem}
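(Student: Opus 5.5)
The plan is to reduce in two steps: first from $\mathcal{G}$ to its connected components, then from each connected component to a vertex group. Here $\mathcal{G}_x$ is the automorphism group of a chosen object $x$ in each component.

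\textbf{Step 1: splitting over components.} A functor out of a coproduct of categories is the same as a tuple of functors, so
\[
\set^{\mathcal{G}}\simeq\prod_{i=1}^n\set^{\mathcal{G}_{[i]}} .
\]
I will check that this equivalence respects the structures used in Section 3.
\begin{itemize}
\item Crossed $\mathcal{G}$-sets over a $\mathcal{G}$-monoid: a $\mathcal{G}$-monoid is exactly an $n$-tuple of $\mathcal{G}_{[i]}$-monoids.
\item The monoidal product: it is computed objectwise (a fibre product over the monoid followed by the multiplication), so it is computed componentwise.
\item Disjoint union: also componentwise.
\end{itemize}
It follows that the category of crossed $\mathcal{G}$-sets is monoidally equivalent to the product of the corresponding categories for the $\mathcal{G}_{[i]}$. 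Taking Grothendieck rings then gives $B^c(\mathcal{G})\cong\prod_i B^c(\mathcal{G}_{[i]})$. The underlying reason is that the Grothendieck group of a product of semirings with componentwise sum is the product of the Grothendieck groups.

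\textbf{Step 2: a connected groupoid versus its vertex group.} For connected $\mathcal{G}_{[i]}$, pick an object $x$. The inclusion $\mathcal{G}_x\hookrightarrow\mathcal{G}_{[i]}$ of the one-object full subgroupoid is an equivalence of categories. Concretely, choose arrows $\gamma_y\colon x\to y$ for each object $y$, with $\gamma_x=\id$; these give a retraction functor $r\colon\mathcal{G}_{[i]}\to\mathcal{G}_x$. Precomposition then gives an equivalence
\[
\set^{\mathcal{G}_{[i]}}\simeq\set^{\mathcal{G}_x},
\]
which is restriction to $x$, with quasi-inverse $r^*$.

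I will then verify the following.
\begin{itemize}
\item Restriction sends the relevant $\mathcal{G}_{[i]}$-monoid (the conjugation/adjoint monoid used to define $B^c$) to the conjugation $\mathcal{G}_x$-monoid $\mathcal{G}_x$. This holds because its value at $x$ is $\mathrm{Aut}(x)$ with the conjugation action.
\item Restriction sends crossed sets over it to crossed $\mathcal{G}_x$-sets in the classical sense of \cite{Yos97}.
\item Restriction is strict monoidal, because the monoidal structure is defined objectwise.
\item Restriction preserves coproducts.
\end{itemize}
Since both categories are equivalent to each other compatibly with all these structures, the isomorphism classes and their sums and products correspond. Hence $B^c(\mathcal{G}_{[i]})\cong B^c(\mathcal{G}_x)$, and the latter is the classical crossed Burnside ring of the finite group $\mathcal{G}_x$. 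Combining this with Step 1 and indexing the components by $\pi_0(\mathcal{G})$ gives the theorem.

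\textbf{Main obstacle.} I expect the hard part to be showing that the quasi-inverse $r^*$ (or the equivalence on the over-categories) is compatible with the crossed/monoid structure. That is, an object over the monoid $M$ at $x$ must extend to an object over $M$ on all of $\mathcal{G}_{[i]}$, and that extension must carry the monoid structure. The monoid $M$ on $\mathcal{G}_{[i]}$ is only isomorphic to $r^*(M(x))$, not equal to it, so I will first construct an explicit isomorphism of $\mathcal{G}_{[i]}$-monoids $M\cong r^*(M(x))$ using transport along the chosen $\gamma_y$. I will then use that the slice category over isomorphic monoids are monoidally isomorphic. Everything else reduces to routine objectwise checks.
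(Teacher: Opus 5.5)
Your proposal is correct and follows the paper's route: the paper likewise splits $\set^{\mathcal{G}}/\mathcal{G}^c$ as a product over the connected components, invokes the monoidal equivalence $\set^{\mathcal{G}_{[i]}}/\mathcal{G}^c_{[i]}\simeq\set^{\mathcal{G}_x}/\mathcal{G}^c_x$ of Theorem~\ref{ME} (whose extension functor composes with the comparison isomorphism $\eta_{\mathcal{G}^c}$, just as you compose with your transported isomorphism $M\cong r^*(M(x))$), and then takes Grothendieck rings. One small slip: the tensor product is the plain product $X\times Y$ followed by the monoid multiplication, not a fibre product over the monoid, but this does not affect your objectwise argument.
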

This theorem enables us to compute the crossed  Burnside ring of a groupoid using methods developed for the crossed Burnside ring of a group.
\setcounter{section}{1}

\section{Preliminaries}
\subsection{Notations and conventions}
\begin{itemize}
\item Throughout this paper, calligraphic symbols (e.g., $\mathcal{G}$) denote a finite groupoid. Whereas plain symbols (e.g., $G$), refer to either an object of a groupoid or a one-object groupoid (that is, a group).
\item We identify a finite group $G$ with the one-object groupoid associated with $G$. 
\item The empty category is a finite groupoid; it is excluded from the class of finite groupoids considered in this paper.
\item In a groupoid $\mathcal{G}$, the composition $g\circ g'$ of morphisms is denoted by $gg'$. (This omission is convenient and visually clear when considering groupoids as extensions of groups.) \item In the disjoint union $\coprod_{i\in I} A_i$, the element corresponding to $x\in A_i$ is denoted by $\langle i,x \rangle$. 
\item For a monoid $M$, we denote its unit element by $1_M$.
\item  For a ring $A$, we denote its multiplicative unit element by $1_A$.
\item We use the symbol $\circ$ to denote horizontal composition, and the symbol $*$ to denote vertical composition of natural transformations. We also denote the identity natural transformation on a functor $F$ simply by $F$.
\item Fix a Grothendieck universe $\mathfrak{U}$, and assume that every category $C$ considered in this manuscript is $\mathfrak{U}$-small. That is, the collection of objects $C_0$ and the collection of morphisms $C_1$ both belong to $\mathfrak{U}$. It is well known that the existence of a Grothendieck universe is equivalent to the existence of an inaccessible cardinal; for details on this relationship, see \cite{Wil69}. For a functor $F:C\to D$, we denote by $F_0$ the map on objects and by $F_1$ the map on morphisms.
\item For a category $\mathcal{C}$, define the maps $\dom$ and $\cod$ as follows:
\[\dom:\mathcal{C}_1\to\mathcal{C}_0\;;\;f\mapsto \dom f,\]
\[\cod:\mathcal{C}_1\to\mathcal{C}_0\;;\;f\mapsto \cod f.\]
\item We let $\cong$ denote an isomorphism of categories and $\simeq$ an equivalence of categories.
\end{itemize}
Here we list the notation that will be used frequently without further comment.\\

\begin{tabular}{@{}l l@{}}
 $C(x,y)$& Set of morphisms from $x$ to $y$ in a category $C$\\
 $C/x$&Slice category of $C$ over $x\in C_0$\\
 $\set$& Category of finite sets\\
 $1$& Terminal object of a category\\
 $\mon$& Category of finite monoids\\
$|X|$& Cardinality of a set $X$\\
\end{tabular}

\subsection{Groupoids}
In this subsection, we prepare the groundwork for the subsequent sections. Throughout this paper, we treat groupoids within categorical framework. Algebraic approaches to the topics discussed in this section can be found in \cite{Iva02}, \cite{ES23}, \cite{AMP20}.

\begin{itemize}
\item
A category $\mathcal{G}$ is a \textbf{groupoid} if every morphism in $\mathcal{G}$ is invertible.
\item
$(2)$ For each object $x\in\mathcal{G}_0$, the set $\mathcal{G}(x,x)$, denoted by $\mathcal{G}_x$, is called the \textbf{isotropy group} at $x\in\mathcal{G}_0$.
\item A subcategory $\mathcal{H}$ of a groupoid $\mathcal{G}$  that is itself a groupoid is called a \textbf{subgroupoid} of $\mathcal{G}$.

\item If $\mathcal{N}$ is a wide subgroupoid of $\mathcal{G}$, then $\mathcal{N}$ is said to be a \textbf{normal subgroupoid} if $^g\mathcal{N}_{\dom g}=\mathcal{N}_{\cod g}$ for every $g\in \mathcal{G}_1$. Here, $^g\mathcal{N}_{\dom g}$ denotes the set $\{gng^{-1}\mid n\in \mathcal{N}_{\dom g}\}$.
\end{itemize}

\begin{definition}
If $X$ is a finite set, then groupoid $\Pair(X)$ is the category defined by the following conditions:
\begin{enumerate}
\item $\Pair(X)_0:=X,$
\item $\Pair(X)(x,y):=\{(x,y)\}$, for \;$(x,y)\in X\times X,$
\item The composition of $f=(x,y)$ and $g=(y,z)$ is defined by $gf:=(x,z)$.
\end{enumerate}
The groupoid $\Pair(X)$ is called the \textbf{groupoid of pairs}.
\end{definition}

\begin{itemize}
\item Let $\mathcal{G}$ be a groupoid. For $x,y\in \mathcal{G}_0$, define $x\sim y$ if there exists a morphism $g:x\to y$. Then $\sim$ is an equivalence relation on $\mathcal{G}_0$. The elements of the quotient set $\mathcal{G}_0/{\sim}$ are called the \textbf{connected components} of $\mathcal{G}$, and we denote this set by $\pi_0(\mathcal{G})$. In particular, If $|\pi_0(\mathcal{G})|=1$, then $\mathcal{G}$ is called a \textbf{connected groupoid}.
\end{itemize}

The following proposition is well known.
\begin{proposition}\label{gpdst}
Let $\mathcal{G}$ be a connected groupoid and let $x\in\mathcal{G}_0$. Then, $\mathcal{G}\cong\mathcal{G}_x\times\;\Pair(\mathcal{G}_0)$.
\end{proposition}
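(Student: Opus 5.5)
We will prove the statement by writing down an explicit isomorphism of categories $\Phi:\mathcal{G}_x\times\Pair(\mathcal{G}_0)\to\mathcal{G}$. This requires a \emph{choice of connecting morphisms}. Since $\mathcal{G}$ is connected, for each $y\in\mathcal{G}_0$ we can fix a morphism $\tau_y:x\to y$, and we normalize $\tau_x=1_x$. Here, as usual, the product groupoid $\mathcal{G}_x\times\Pair(\mathcal{G}_0)$ has object set $\{\ast\}\times\mathcal{G}_0\cong\mathcal{G}_0$. A morphism $y\to z$ in it is a pair $(g,(y,z))$ with $g\in\mathcal{G}_x$, and composition is componentwise.

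We define $\Phi$ as the identity on objects and on morphisms by
\[
\Phi_1(g,(y,z)) := \tau_z\, g\, \tau_y^{-1} \in \mathcal{G}(y,z).
\]
\textbf{Functoriality} is a direct computation. For composable pairs,
\[
\Phi_1(h,(z,w))\,\Phi_1(g,(y,z)) = \tau_w h\tau_z^{-1}\tau_z g\tau_y^{-1} = \tau_w (hg)\tau_y^{-1} = \Phi_1(hg,(y,w)),
\]
and $\Phi_1(1_x,(y,y))=\tau_y\tau_y^{-1}=1_y$.

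Next comes \textbf{bijectivity on morphisms}. For fixed $y,z$, the map $\mathcal{G}_x\to\mathcal{G}(y,z)$ given by $g\mapsto\tau_z g\tau_y^{-1}$ has inverse $f\mapsto\tau_z^{-1} f\tau_y$. One checks that $\tau_z^{-1}f\tau_y$ is indeed an endomorphism of $x$, since its domain and codomain are both $x$. Thus $\Phi$ is bijective on each hom-set and on objects, so it is an isomorphism of categories. Its inverse functor is explicitly $f\mapsto(\tau_{\cod f}^{-1}f\tau_{\dom f},(\dom f,\cod f))$.

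There is no serious obstacle here. The only points needing care are bookkeeping. First, the convention for $\Pair$: the unique morphism $y\to z$ is $(y,z)$, and it composes as $(z,w)(y,z)=(y,w)$, which is what makes the computation above consistent with the order of composition. Second, the isomorphism is not canonical, since it depends on the choice of the $\tau_y$. Since the statement only asserts the existence of an isomorphism, this is harmless, but it is worth remarking for later use in the decomposition theorem.
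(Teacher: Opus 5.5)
Your proof is correct and complete. The functor $\Phi_1(g,(y,z))=\tau_z g\tau_y^{-1}$ respects composition and identities, and it is bijective on objects and on each hom-set, with inverse $f\mapsto\tau_z^{-1}f\tau_y$. So it is an isomorphism of categories, which is what the paper's $\cong$ requires.

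The paper gives no proof of its own; it only calls the proposition well known. Your construction via a choice of connecting morphisms $\tau_y:x\to y$ is the standard argument. The first component $f\mapsto\tau_{\cod f}^{-1}f\tau_{\dom f}$ of your inverse functor is exactly a quasi-inverse $R$ of the inclusion $\mathcal{G}_x\to\mathcal{G}$. The paper uses such an $R$ in Theorem~\ref{ME} without constructing it.
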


The following lemma is well known; however, since we will use this equivalence in Section 3, we include its proof here.

\begin{lemma}\label{equiv}
If $\mathcal{G}$ is connected, fix an element $z\in\mathcal{G}_0$, then the categories $\mathcal{G}$ and $\mathcal{G}_z$ are equivalent.
\end{lemma}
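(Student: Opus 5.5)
We will prove the lemma by writing down an explicit pair of functors and checking that they form an equivalence. Let $\iota:\mathcal{G}_z\to\mathcal{G}$ be the inclusion of the one-object full subcategory on $z$. Since $\mathcal{G}(z,z)=\mathcal{G}_z$, this inclusion is fully faithful. Since $\mathcal{G}$ is connected, every object $x$ is isomorphic to $z$, so $\iota$ is essentially surjective. Invoking the standard criterion (fully faithful and essentially surjective implies equivalence) would already finish the proof. However, that criterion needs choice, and Section 3 will use an explicit quasi-inverse, so we construct one directly.

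\textbf{Choice of connecting morphisms.} For each $x\in\mathcal{G}_0$ we choose a morphism $\gamma_x:x\to z$, which exists by connectedness, and we take $\gamma_z:=1_z$. Because $\mathcal{G}_0$ is finite, this is a finite choice.

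\textbf{The functor $R:\mathcal{G}\to\mathcal{G}_z$.} On objects, $R$ sends every $x$ to the unique object. On a morphism $g:x\to y$, we set
\[R(g):=\gamma_y\, g\, \gamma_x^{-1}\in\mathcal{G}(z,z)=\mathcal{G}_z.\]
To see that $R$ preserves identities, note that $R(1_x)=\gamma_x\gamma_x^{-1}=1_z$. To see that it preserves composition, take $h:y\to w$. Then
\[R(h)R(g)=\gamma_w h\gamma_y^{-1}\gamma_y g\gamma_x^{-1}=\gamma_w hg\gamma_x^{-1}=R(hg).\]

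\textbf{The two composites.} First, $R\circ\iota=\mathrm{id}_{\mathcal{G}_z}$ on the nose: for $a\in\mathcal{G}_z$ we have $R(a)=\gamma_z a\gamma_z^{-1}=a$, because $\gamma_z=1_z$. Second, we define $\eta:\mathrm{id}_{\mathcal{G}}\Rightarrow\iota\circ R$ by $\eta_x:=\gamma_x:x\to z$. Each component is invertible because $\mathcal{G}$ is a groupoid. Naturality requires, for $g:x\to y$, the identity $\iota R(g)\,\gamma_x=\gamma_y\, g$. Substituting the definition, the left side is $\gamma_y g\gamma_x^{-1}\gamma_x=\gamma_y g$, so the square commutes. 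Hence $\eta$ is a natural isomorphism, and $\iota$ and $R$ are quasi-inverse. This proves $\mathcal{G}\simeq\mathcal{G}_z$.

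There is no real obstacle. The only points that need care are fixing $\gamma_z=1_z$, so that one composite is strictly the identity, and writing the naturality square in the paper's composition convention $gg'=g\circ g'$.
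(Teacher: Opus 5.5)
Your proof is correct and follows the paper's route. The paper's entire argument is that the inclusion $U\colon\mathcal{G}_z\to\mathcal{G}$ is fully faithful and essentially surjective. Your explicit quasi-inverse $R(g)=\gamma_y g\gamma_x^{-1}$, with $\gamma_z=1_z$ so that $R\iota=\mathrm{id}$ strictly, just unpacks the standard proof of that criterion, and it usefully supplies the $R$ that Theorem~\ref{ME} later invokes but the paper never writes down. One small quibble: your construction also chooses the $\gamma_x$, so saying the criterion ``needs choice'' does not distinguish the two arguments, since both use only finitely many choices here because $\mathcal{G}_0$ is finite.
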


\begin{proof}
Let $U:\mathcal{G}_z\to \mathcal{G}$  be the inclusion functor. Since $U$ is fully faithful and essentially surjective, $\mathcal{G}$ and $\mathcal{G}_z$ are equivalent categories.
\end{proof}

\begin{itemize}
\item The functor category $\set^\mathcal{G}$ is called the \textbf{category of $\mathcal{G}$-sets}. An object in $\set^\mathcal{G}$ is called a \textbf{$\mathcal{G}$-set}, and a morphism in $\set^\mathcal{G}$ is called a \textbf{$\mathcal{G}$-map}.
\item A functor $\mathcal{G}\to \mon$ is called a \textbf{$\mathcal{G}$-monoid}. Let $X$ be a $\mathcal{G}$-monoid, and let $U:\mon\to\set$ be the forgetful functor. Then the composite functor $U\circ X$ is a $\mathcal{G}$-set. We denote $U\circ X$ by $\overline{X}$.
\end{itemize}

\begin{definition}
\item If $X$ is a $\mathcal{G}$-set, then an action groupoid $\mathcal{G}\ltimes X$ is the category defined by the following conditions: 
\begin{enumerate}
\item $(\mathcal{G}\ltimes X)_0:=\coprod_{G\in \mathcal{G}_0} X(G),$
\item $(\mathcal{G}\ltimes X)(\langle G,x \rangle, \langle G',x' \rangle):=\{g\in\mathcal{G}(G,G')\mid X(g)(x)=x'\},$
\item The composition is induced by that in $\mathcal{G}$.
\end{enumerate}
A $\mathcal{G}$-set $X$ is called a \textbf{transitive} $\mathcal{G}$-set if $|\pi(\mathcal{G}\ltimes X)_0|=1$ holds.
\end{definition}

\begin{remark}\label{E}
The action groupoid $\mathcal{G}\ltimes X$ is also referred to as the category of elements of $X$ or the Grothendieck construction.
\end{remark}

\begin{definition}
If $\mathcal{G}$ is a groupoid, then \textbf{conjugation action} of $\mathcal{G}$
is the functor $\mathcal{G}^c:\mathcal{G}\to \mon$ defined by the following conditions: 
\begin{enumerate}
\item For an object $x\in \mathcal{G}_0$, define $\mathcal{G}^c(x):=\mathcal{G}_x.$
\item For a morphism $g:x\to y$, define $\mathcal{G}^c(g):\mathcal{G}_x\to \mathcal{G}_y$ by $x\mapsto gxg^{-1}$.
\end{enumerate}
\end{definition}

\begin{remark}
If $\mathcal{N}$ is a normal subgroupoid of $\mathcal{G}$, then $\mathcal{N}^c$ can be defined in the same way as $\mathcal{G}^c$ since $\mathcal{N}$ is closed under conjugation of $\mathcal{G}$.
\end{remark}

\subsection{The slice category of $\mathcal{G}$-sets}
In this subsection, we introduce the category of crossed $\mathcal{G}$-sets and prove its fundamental properties.

\begin{definition}
The Grothendieck ring $K_0(\set^{\mathcal{G}},\sqcup, \times)$ is called the \textbf{Burnside ring} of $\mathcal{G}$, and we denote it by $B(\mathcal{G})$. 
\end{definition}

\begin{definition}
If $S$ is a $\mathcal{G}$-set, then the slice category $\set^\mathcal{G}/S$ is called the category of \textbf{$\mathcal{G}$-set over $S$}. The product $*$ in the slice category $\set^\mathcal{G}/S$ is given by the fiber product: \[(X\to S)*(Y\to S)=(X\times_S Y\to S).\]
We refer to this product $*$ as the \textbf{Hadamard product}. Grothendieck ring \[K_0(\set^\mathcal{G}/X ,\sqcup,*)\] is called the \textbf{crossed Burnside ring of $X$ with Hadamard product}, and we denote it by $B_{\Had}(\mathcal{G},X)$.
\end{definition}

\begin{theorem}\label{AG}
Let $\mathcal{G}$ be a connected groupoid, let $y$ denote the Yoneda embedding $y:\mathcal{G}^{op}\to \set^\mathcal{G}$, and let $X$ be a $\mathcal{G}$-$\set$. Then the following statements hold:\\
\noindent
$(1)$ There exists a categorical isomorphism $(\mathcal{G}\ltimes X)^{op}\cong y\downarrow X$.\\
\noindent
$(2)$ There is an equivalence of categories $\set^{\mathcal{G}\ltimes X}\simeq \set^{\mathcal{G}}/X$.
\end{theorem}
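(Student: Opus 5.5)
Part (1) and part (2) are proved by direct constructions, with (2) deduced from the standard equivalence between presheaves on a category of elements and presheaves over the corresponding object.

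For (1), I would write down the comma category $y\downarrow X$ explicitly. Its objects are pairs $(G,\alpha)$ with $G\in\mathcal{G}_0$ and $\alpha:\mathcal{G}(G,-)\to X$ a natural transformation. By the Yoneda lemma, $\alpha$ corresponds bijectively to the element $\alpha_G(1_G)\in X(G)$. This gives a bijection on objects $(G,\alpha)\mapsto\langle G,\alpha_G(1_G)\rangle\in(\mathcal{G}\ltimes X)_0$.

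A morphism $(G,\alpha)\to(G',\alpha')$ in $y\downarrow X$ is a morphism $g:G'\to G$ in $\mathcal{G}$ (a morphism of $\mathcal{G}^{op}$ from $G$ to $G'$) with $\alpha'\circ y(g)=\alpha$. Here $y(g)=g^*:\mathcal{G}(G,-)\to\mathcal{G}(G',-)$ is precomposition by $g$. Evaluating at $1_G$ and using naturality, the condition reads $X(g)(x')=x$, where $x=\alpha_G(1_G)$ and $x'=\alpha'_{G'}(1_{G'})$. So such morphisms are exactly the morphisms $\langle G',x'\rangle\to\langle G,x\rangle$ of $\mathcal{G}\ltimes X$, that is, morphisms $\langle G,x\rangle\to\langle G',x'\rangle$ of $(\mathcal{G}\ltimes X)^{op}$. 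Composition and identities are inherited from $\mathcal{G}$ on both sides, so the assignment is functorial. Being bijective on objects and on hom-sets, it is an isomorphism of categories. The one thing to check carefully is variance: I would verify that the Yoneda bijection is compatible with the direction of $g$, which is the only real content of this step. Connectedness is not needed here.

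For (2), I would build the functors directly rather than cite the general ``presheaves on the category of elements'' theorem, since Remark \ref{E} only identifies $\mathcal{G}\ltimes X$ as a category of elements.

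Define $\Phi:\set^{\mathcal{G}\ltimes X}\to\set^{\mathcal{G}}/X$ by sending $F$ to the $\mathcal{G}$-set
\[
\Phi(F)(G)=\coprod_{x\in X(G)}F\langle G,x\rangle,
\]
with the evident projection to $X$. A morphism $g:G\to G'$ acts on the summand indexed by $x$ by $F(g)$, landing in the summand indexed by $X(g)(x)$. Conversely, define $\Psi$ by sending $p:Y\to X$ to the fibres $\Psi(p)\langle G,x\rangle=p_G^{-1}(x)$. A morphism $g:\langle G,x\rangle\to\langle G',x'\rangle$ acts by the restriction of $Y(g)$; this is well defined because $p$ is natural, so $Y(g)$ maps the fibre over $x$ into the fibre over $X(g)(x)=x'$.

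I would then exhibit natural isomorphisms $\Psi\Phi\cong\mathrm{id}$ and $\Phi\Psi\cong\mathrm{id}$. The first holds because the fibre of the projection $\coprod_{x} F\langle G,x\rangle\to X(G)$ over $x$ is canonically the summand $F\langle G,x\rangle$. The second holds because every finite set over $X(G)$ is canonically the disjoint union of its fibres. These are not identities, only canonical bijections, which is why the result is an equivalence rather than an isomorphism.

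The main obstacle is the bookkeeping for naturality of these two isomorphisms in both $G$ and the morphisms of the slice category. Concretely, one must check that morphisms over $X$ correspond to natural transformations of fibre functors. I would handle this by noting that a map over $X$ preserves fibres, so it is determined fibrewise, with naturality in $\mathcal{G}\ltimes X$ matching naturality in $\mathcal{G}$. Finiteness of all the sets is preserved because $X(G)$ is finite.
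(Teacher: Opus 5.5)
Your proof is correct, and it takes a more explicit route than the paper.

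\textbf{How the paper argues.} For (1) it chains three isomorphisms: $y\downarrow X\cong\el(X)$ from general theory, $\el(X)\cong\mathcal{G}\ltimes X$ from Remark \ref{E}, and the inversion isomorphism $(\mathcal{G}\ltimes X)^{op}\cong\mathcal{G}\ltimes X$, $g\mapsto g^{-1}$. For (2) it quotes the standard equivalence $\set^{(y\downarrow X)^{op}}\simeq\set^{\mathcal{G}}/X$ and substitutes (1).

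\textbf{How you argue.} For (1) you do the Yoneda computation by hand. This lands directly on $(\mathcal{G}\ltimes X)^{op}$ with the variance visibly correct, so you never need the inversion functor. The paper's chain closes up only because $\mathcal{G}\ltimes X$ is a groupoid, whose inversion absorbs the stray ${}^{op}$. For (2) your fibre/coproduct pair $\Phi,\Psi$ is exactly the proof of the cited presheaf theorem, specialised to this case. It also makes (2) independent of (1).

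\textbf{What each route buys.} Your version is self-contained and shows that neither connectedness nor invertibility in $\mathcal{G}$ is used; your construction works for any finite category and any $X$. It checks explicitly that the equivalence restricts to finite sets, a point the paper's appeal to the general result for arbitrary sets leaves implicit. The explicit fibre description also makes visible that pointwise products in $\set^{\mathcal{G}\ltimes X}$ correspond to fibre products over $X$, which is what the corollary that follows needs to identify $B(\mathcal{G}\ltimes X)$ with $B_{\Had}(\mathcal{G},X)$ as rings. The paper's route buys brevity.
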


\begin{proof}
$(1)$ By the general theory of category of elements, we have $(y\downarrow X)\cong \el(X)$. By Remark \ref{E}, we have $\el(X)\cong \mathcal{G}\ltimes X$. Moreover, define a functor $F:(\mathcal{G}\ltimes X)^{op}\to(\mathcal{G}\ltimes X)$ by the identity on objects and $g\mapsto g^{-1}$ on morphisms. Then $F$ is an isomorphism of categories. Combining these, we obtain $y\downarrow X\cong (\mathcal{G}\ltimes X)^{op}$.\\
\noindent
 $(2)$ By general theory of presheaf, we have $\set^{{(y\downarrow X)}^{op}}\simeq \set^\mathcal{G}/X$. By combining the equivalence of categories $\set^{{(y\downarrow X)}^{op}}\simeq \set^\mathcal{G}/X$ with $(1)$, we obtain $\set^{\mathcal{G}\ltimes X}\simeq \set^{\mathcal{G}}/X$.
\end{proof}

\begin{corollary}
The Burnside ring of $\mathcal{G}\ltimes X$ is isomorphic to the crossed Burnside ring of the $\mathcal{G}$-set $X$ endowed with the Hadamard product.
\end{corollary}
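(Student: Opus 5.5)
The plan is to obtain the corollary directly from Theorem \ref{AG}(2), by checking that the equivalence there respects the operations used to build both Grothendieck rings. By definition, $B(\mathcal{G}\ltimes X)=K_0(\set^{\mathcal{G}\ltimes X},\sqcup,\times)$ and $B_{\Had}(\mathcal{G},X)=K_0(\set^{\mathcal{G}}/X,\sqcup,*)$. Since $K_0$ of a category with two monoidal structures depends only on isomorphism classes together with the induced addition and multiplication, it suffices to show that the equivalence $\Phi:\set^{\mathcal{G}\ltimes X}\xrightarrow{\ \simeq\ }\set^{\mathcal{G}}/X$ has two properties: it carries $\sqcup$ to $\sqcup$ and $\times$ to the fiber product $*$ up to natural isomorphism, and it carries the terminal object to the unit $(\mathrm{id}:X\to X)$. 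Then $\Phi$ induces a bijection on isomorphism classes of objects that respects both operations, hence a ring isomorphism of Grothendieck rings.

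To carry this out I will make the equivalence explicit rather than use the abstract presheaf argument. Given $F:\mathcal{G}\ltimes X\to\set$, I set
\[\Phi(F)(G):=\coprod_{x\in X(G)}F(\langle G,x\rangle),\]
with the projection to $X(G)$. A morphism $g:G\to G'$ acts by sending $\langle x,a\rangle$ to $\langle X(g)x, F(g)a\rangle$. The quasi-inverse sends $p:Y\to X$ to the functor $\langle G,x\rangle\mapsto p_G^{-1}(x)$.

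The key observation is that an equivalence preserves all limits and colimits that exist. The product in $\set^{\mathcal{G}\ltimes X}$ is a categorical product, and the fiber product $*$ is exactly the categorical product in the slice $\set^{\mathcal{G}}/X$. Likewise, $\sqcup$ is the coproduct in both categories. So $\Phi$ preserves both operations up to canonical isomorphism. The terminal object of $\set^{\mathcal{G}\ltimes X}$, the constant singleton functor, is sent to $\mathrm{id}_X$, which is terminal in the slice and is the unit for $*$. This also shows the unit element is preserved.

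I expect no real obstacle. The only point to state carefully is the identification $*=$ slice product; note also that connectedness of $\mathcal{G}$ plays no role here. If one is worried about the hypothesis in Theorem \ref{AG}, one can observe that the explicit $\Phi$ above works for arbitrary $\mathcal{G}$. To finish, I will record that the isomorphism-class bijection is additive and multiplicative, so the universal property of the Grothendieck group extends it to a ring isomorphism $B(\mathcal{G}\ltimes X)\cong B_{\Had}(\mathcal{G},X)$.
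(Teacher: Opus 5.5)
Your proposal is correct and follows the same route as the paper, which simply invokes the equivalence $\set^{\mathcal{G}\ltimes X}\simeq\set^{\mathcal{G}}/X$ of Theorem \ref{AG}(2) and passes to Grothendieck rings. Your additional checks fill in exactly what the paper's one-line proof leaves implicit: that the equivalence carries $\sqcup$ to $\sqcup$, $\times$ to the fiber product (the categorical product in the slice), and the terminal object to $\mathrm{id}_X$, and that your explicit construction does not need $\mathcal{G}$ to be connected, which Theorem \ref{AG} assumes.
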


\begin{proof}
By Theorem \ref{AG}, there is an equivalence of categories $\set^{\mathcal{G}\ltimes X}\simeq \set^{\mathcal{G}}/X$. Taking Grothendieck rings on both sides, we obtain a ring isomorphism $B(\mathcal{G}\ltimes X)\cong B_{\mathrm{Had}}(\mathcal{G},X)$. 
\end{proof}

\section{The crossed Burnside rings of finite groupoids.}

In this section, we introduce a monoidal structure on the category of crossed $\mathcal{G}$-sets and investigate its properties, including the main theorems concerning the crossed Burnside ring of a groupoid.

\begin{theorem}\label{monoidal}
If $S$ is a $\mathcal{G}$-monoid, then the category $\set^\mathcal{G}/ \overline{S}$ is a monoidal category.
\end{theorem}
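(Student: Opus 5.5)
The plan is to define the tensor product on $\set^\mathcal{G}/\overline{S}$ by mimicking the classical crossed $G$-set product, using the monoid structure of $S$ objectwise. For objects $(X,\alpha:X\to\overline{S})$ and $(Y,\beta:Y\to\overline{S})$, set $X\otimes Y$ to be the product $\mathcal{G}$-set $X\times Y$ in $\set^\mathcal{G}$, with $(X\times Y)(x)=X(x)\times Y(x)$ and $(X\times Y)(g)=X(g)\times Y(g)$. The structure map is $\alpha\cdot\beta$, with components
\[(\alpha\cdot\beta)_x(a,b):=\alpha_x(a)\,\beta_x(b)\in S(x),\]
where the product is taken in the monoid $S(x)$. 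On morphisms, $f\otimes f':=f\times f'$. The unit object is the terminal $\mathcal{G}$-set $1$ with structure map $\eta:1\to\overline{S}$, $\eta_x(\ast)=1_{S(x)}$.

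The first step is to check that these definitions make sense. For naturality of $\alpha\cdot\beta$ I will use that each $S(g):S(x)\to S(y)$ is a monoid homomorphism:
\[S(g)(\alpha_x(a)\beta_x(b))=S(g)(\alpha_x(a))\,S(g)(\beta_x(b))=\alpha_y(X(g)a)\,\beta_y(Y(g)b).\]
Naturality of $\eta$ follows because monoid homomorphisms preserve units. If $f:X\to X'$ and $f':Y\to Y'$ are morphisms over $\overline{S}$, then $(\alpha'\cdot\beta')\circ(f\times f')=\alpha\cdot\beta$ holds componentwise. Hence $\otimes$ is a bifunctor, and functoriality itself is inherited from $\times$ in $\set^\mathcal{G}$.

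The second step is to produce the coherence isomorphisms. I will take them from the cartesian monoidal structure of $\set^\mathcal{G}$: the associator $a:(X\times Y)\times Z\to X\times(Y\times Z)$ and the unitors $1\times X\to X$, $X\times 1\to X$. These are natural isomorphisms in $\set^\mathcal{G}$. The only thing to verify is that they are morphisms in the slice, that is, that they commute with the structure maps. For $a$ this is associativity in each monoid $S(x)$, since $(\alpha_x(a)\beta_x(b))\gamma_x(c)=\alpha_x(a)(\beta_x(b)\gamma_x(c))$. For the unitors it is the unit law $1_{S(x)}\alpha_x(a)=\alpha_x(a)=\alpha_x(a)1_{S(x)}$. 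Their inverses then automatically lie in the slice as well. Naturality of these isomorphisms in $\set^\mathcal{G}/\overline{S}$ follows from naturality in $\set^\mathcal{G}$, because the forgetful functor $\set^\mathcal{G}/\overline{S}\to\set^\mathcal{G}$ is faithful.

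Finally, the pentagon and triangle axioms hold in $\set^\mathcal{G}$, where they are computed pointwise in $\set$. They transfer to the slice because the forgetful functor is faithful and strictly preserves $\otimes$, $a$ and the unitors. The main obstacle is not deep; it is the bookkeeping in the first step. One must confirm that the structure map is natural, which is exactly where it is essential that $S$ lands in $\mon$, so that each $S(g)$ is a monoid homomorphism, rather than being merely a $\mathcal{G}$-set. I will write that verification out explicitly and treat the rest as transport of structure from $(\set^\mathcal{G},\times,1)$.
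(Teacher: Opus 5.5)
Your proposal is correct and follows the paper's route: the same tensor $X\times Y$ with structure map $(a,b)\mapsto\alpha_x(a)\beta_x(b)$, the same unit $1\to\overline{S}$ picking out $1_{S(x)}$, and the cartesian associator and unitors. The paper only lists this data and asserts the pentagon and triangle axioms. Your version supplies the checks it leaves implicit: naturality of the structure maps (each $S(g)$ is a monoid homomorphism), that the coherence isomorphisms are slice morphisms (associativity and unit laws in $S(x)$), and transfer of the axioms along the faithful forgetful functor to $\set^\mathcal{G}$.
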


\begin{proof}
We now construct a monoidal structure on $\set^\mathcal{G}/ \overline{S}$. To this end, we specify the tensor product functor, the unit object, and the coherence isomorphisms. 

\noindent
$(1)$ Tensor product functor

The functor $\otimes:\set^\mathcal{G}/ \overline{S}\times\set^\mathcal{G}/ \overline{S}:\to \set^\mathcal{G}/ \overline{S}$ is defined by the following conditions: 
\begin{enumerate}
\item For an object $(\theta:X\to \overline{S},\tau:Y\to \overline{S})$, define the \[\theta\otimes\tau:X\times Y\to \overline{S}\] whose component at $G\in\mathcal{G}_0$ is the map \[(\theta\otimes\tau)_G:X(G)\times Y(G)\to \overline{S}(G)\]
by $(x,y)\mapsto \theta(x)\tau(y)$. \\
\item For $\mathcal{G}$-maps $\eta:X\to X'$ and $\eta:Y\to Y'$, we define $\otimes(\eta,\tau)$ by specifying its $G$-component as $\otimes(\eta,\tau)_G=\eta_G\times\tau_G$ for each $G\in\mathcal{G}_0$.
\end{enumerate}

\noindent
$(2)$ Unit object

Define the natural transformation $I:1\to \overline{S}$ whose component at $G\in\mathcal{G}_0$ is the map $I_G:1(G)\to \overline{S}(G)$ by $\bullet\mapsto 1_{S(G)}$. The unit object is denoted $I$.\\

Hencforth, we write $X$ for the $\mathcal{G}$-map $X\to\mathcal{G}^c$ when no confusion arises.\\

\noindent
$(3)$ Associator

For any $X,Y,Z\in\set^\mathcal{G}/ \overline{S}$, define the natural transformation \[\alpha_{X,Y,Z}:(X\otimes Y)\otimes Z\to X\otimes (Y\otimes Z)\]whose component at $G\in\mathcal{G}_0$ is the map \[(\alpha_{X,Y,Z})_G:(X(G)\otimes Y(G))\otimes Z(G)\to X(G)\otimes (Y(G)\times Z(G))\] by $((x,y),z)\mapsto (x,(y,z))$.\\

\noindent
$(4)$ Left unitor

For any $X\in\set^\mathcal{G}/ \overline{S}$, define the natural transformation \[l_X:I\otimes X\to X\] whose component at $G\in\mathcal{G}_0$ is the map \[(l_X)_G:I(G)\otimes X(G)\to X(G)\] by $(1_{S(G)},x)\mapsto x$. \\

\noindent
$(5)$ Right unitor

For any $X\in\set^\mathcal{G}/ \overline{S}$, define the natural transformation \[r_X:X\otimes I\to X\] whose component at $G\in\mathcal{G}_0$ is the map \[(r_X)_G:X(G)\otimes I(G)\to X(G)\] by $(x,1_{S(G)})\mapsto x$. \\

\noindent
$(6)$ Coherence conditions
\begin{itemize}
\item Pentagon axiom

For any $X,Y,Z,W\in\set^\mathcal{G}/ \overline{S}$, the diagram
\[\begin{tikzpicture}[>=Stealth,auto,scale=1,every node/.style={scale=0.92}]
\node (A) at (90:3) {$((W\otimes X)\otimes Y)\otimes Z$};
\node (B) at (18:3) {$(W\otimes X)\otimes (Y\otimes Z)$};
\node (C) at (-40:3.7) {$W\otimes (X\otimes (Y\otimes Z))$};
\node (D) at (-140:3.8) {$W\otimes ((X\otimes Y)\otimes Z)$};
\node (E) at (162:3) {$(W\otimes (X\otimes Y))\otimes Z$};
\draw[<-](B) -- node[right] {$\alpha_{W\otimes X,Y,Z}$}(A);
\draw[<-](C) -- node[right] {$\alpha_{W,X,Y\otimes Z}$}(B);
\draw[<-](C) -- node {$\id_W\otimes\alpha_{X,Y,Z}$}(D);
\draw[<-](D) -- node {$\alpha_{W,X\otimes Y,Z}$}(E);
\draw[<-](E) -- node {$\alpha_{W,X,Y}\otimes\id_Z$}(A);
\end{tikzpicture}\]
commutes.

\item Triangle axiom

For any $X,Y\in\set^\mathcal{G}/ \overline{S}$, the diagram
\[\xymatrix{
(X\otimes I)\otimes Y \ar[dr]_-{r_X\otimes\id_Y}\ar[r]^-{\alpha_{X,I,Y}}&X\otimes (I\otimes Y)\ar[d]^-{\id_X\otimes l_Y}\\
{}&X\otimes Y}
\]
commutes.

\end{itemize}
\end{proof}

\begin{lemma}\label{d}
If $X,Y,Z\in\set^\mathcal{G}/\overline{W}$, then 
\[X\otimes (Y\sqcup Z)\cong (X\otimes Y)\sqcup(X\otimes Z).\] 
\end{lemma}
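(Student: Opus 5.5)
The plan is to write down an explicit isomorphism in the slice category $\set^\mathcal{G}/\overline{W}$, where $W$ is a $\mathcal{G}$-monoid and $\otimes$ is the tensor product of Theorem \ref{monoidal}. First I fix what $\sqcup$ means there: for $\theta_Y:Y\to\overline{W}$ and $\theta_Z:Z\to\overline{W}$, the coproduct is $Y\sqcup Z$ computed objectwise in $\set^\mathcal{G}$. Its structure map $[\theta_Y,\theta_Z]:Y\sqcup Z\to\overline{W}$ sends $\langle 1,y\rangle\mapsto\theta_Y(y)$ and $\langle 2,z\rangle\mapsto\theta_Z(z)$ at each $G\in\mathcal{G}_0$. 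This is the coproduct in the slice category, since colimits in a slice are created by the forgetful functor to $\set^\mathcal{G}$, and colimits there are computed pointwise.

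Next, I define $\phi:X\otimes(Y\sqcup Z)\to(X\otimes Y)\sqcup(X\otimes Z)$ componentwise. For $G\in\mathcal{G}_0$, let $\phi_G:X(G)\times(Y(G)\sqcup Z(G))\to (X(G)\times Y(G))\sqcup(X(G)\times Z(G))$ be given by $(x,\langle 1,y\rangle)\mapsto\langle 1,(x,y)\rangle$ and $(x,\langle 2,z\rangle)\mapsto\langle 2,(x,z)\rangle$. Each $\phi_G$ is the standard distributivity bijection in $\set$, and its inverse is given by the evident reverse assignment. Naturality in $G$ holds because for $g:G\to G'$ both sides act by $X(g)\times Y(g)$ or $X(g)\times Z(g)$ on the respective summands, and $\phi$ merely relabels the summands. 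So $\phi$ is an isomorphism in $\set^\mathcal{G}$.

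Finally, I check that $\phi$ is a morphism over $\overline{W}$, which then makes it an isomorphism in the slice category because its inverse is automatically over $\overline{W}$ as well. On the left, the structure map sends $(x,\langle1,y\rangle)$ to $\theta_X(x)\,[\theta_Y,\theta_Z](\langle 1,y\rangle)=\theta_X(x)\theta_Y(y)$. On the right, $\langle1,(x,y)\rangle$ maps to $(\theta_X\otimes\theta_Y)(x,y)=\theta_X(x)\theta_Y(y)$. The two agree, and the second summand is handled identically. The only point needing care is this first step of pinning down the coproduct and its structure map in $\set^\mathcal{G}/\overline{W}$; the rest is routine. I do not expect a real obstacle here, since the monoid multiplication is applied summand by summand and no associativity or unit law of $W$ is needed.
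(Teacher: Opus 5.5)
Your proposal is correct and follows the same approach as the paper, which only says the isomorphism ``can be verified directly''. You supply the details the paper omits: the coproduct in the slice category is computed objectwise, and the objectwise distributivity bijection is a natural isomorphism compatible with the structure maps to $\overline{W}$.
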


\begin{proof}
This can be verified directly.
\end{proof}

\begin{definition}
The multiplication on $K_0(\set^\mathcal{G}/\overline{S},\sqcup)$ are induced by the monoidal structure on $\set^\mathcal{G}/ \overline{S}$
\[(Y\stackrel{\theta}{\to} \overline{S})\otimes (Z\stackrel{\tau}{\to}\overline{S}):=(Y\times Z\stackrel{\theta\times\tau}{\to} \overline{S}\times \overline{S}\stackrel{m}{\to} \overline{S})  \]
induced by the monoid multiplication. This ring is denoted by $B^c(\mathcal{G},S)$. $B^c(\mathcal{G},S)$ is called the \textbf{crossed Burnside ring of the $\mathcal{G}$-monoid S}.
In particular, $B^c(\mathcal{G},\mathcal{G}^c)$ is called the \textbf{crossed Burnside ring of $\mathcal{G}$}.
\end{definition}

\begin{theorem}
The category $\set^\mathcal{G}/ \mathcal{G}^c$ is a braided monoidal category. 
\end{theorem}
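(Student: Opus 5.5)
We equip the monoidal category $\set^\mathcal{G}/\mathcal{G}^c$ of Theorem \ref{monoidal} (with $S=\mathcal{G}^c$) with a braiding modelled on the classical crossed $G$-set braiding of Yoshida. For objects $\theta:X\to\overline{\mathcal{G}^c}$ and $\tau:Y\to\overline{\mathcal{G}^c}$, we define $c_{X,Y}:X\otimes Y\to Y\otimes X$ componentwise. At $G\in\mathcal{G}_0$ we set
\[(c_{X,Y})_G(x,y):=\bigl(y',\,x\bigr),\qquad y':=Y(\theta_G(x))(y),\]
where $\theta_G(x)\in\mathcal{G}_G$ is an automorphism of $G$. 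Since $\mathcal{G}_G$ acts on $Y(G)$ through $Y$, this expression makes sense, and $c$ is the groupoid analogue of $(x,y)\mapsto(xy x^{-1}\text{-translate},x)$.

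\textbf{Step 1: $c_{X,Y}$ is a morphism over $\overline{\mathcal{G}^c}$.} We use that $\tau$ is natural. Writing $g=\theta_G(x)$, we get $\tau_G(Y(g)y)=\mathcal{G}^c(g)(\tau_G(y))=g\,\tau_G(y)\,g^{-1}$. The structure map of the target therefore sends $(y',x)$ to $g\tau_G(y)g^{-1}\cdot g=\theta_G(x)\tau_G(y)$, which equals the structure map of the source.

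\textbf{Step 2: naturality in $G$.} For $h:G\to H$ we must compare $Y(h)Y(g)y$ with $Y(\theta_H(X(h)x))Y(h)y$. Naturality of $\theta$ gives $\theta_H(X(h)x)=h g h^{-1}$, so the two sides agree. The map $(c_{X,Y})_G$ is bijective, with inverse $(y',x)\mapsto(x,Y(\theta_G(x)^{-1})y')$, so $c_{X,Y}$ is an isomorphism. Naturality of $c$ in $X$ and $Y$ follows because maps over $\overline{\mathcal{G}^c}$ commute with the structure maps and are $\mathcal{G}$-maps.

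\textbf{Step 3: the hexagon axioms.} Since the associator is the trivial rebracketing, the first hexagon reduces to showing that $c_{X,Y\otimes Z}$ sends $(x,(y,z))$ to $((gy,gz),x)$ with $g=\theta(x)$, which is the composite of the two elementary braidings. The second hexagon concerns $c_{X\otimes Y,Z}$, which uses $\theta(x)\tau(y)$ acting on $z$. It reduces to the functoriality $Z(\theta(x)\tau(y))=Z(\theta(x))Z(\tau(y))$ together with Step 1, which is needed to track the label of the moved $y$.

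I expect the second hexagon to be the main obstacle, since the bookkeeping of labels after conjugation is where errors appear. The key identity is that the structure map of $Y(\theta(x))y$ is $\theta(x)\tau(y)\theta(x)^{-1}$. Every step is pointwise at a single object $G$, and only isotropy groups $\mathcal{G}_G$ are involved. Alternatively, one could reduce to the group case via Lemma \ref{equiv}, but the direct componentwise verification seems simpler.
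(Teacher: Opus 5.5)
Your proposal is correct and follows essentially the same route as the paper. Both use the same braiding $(x,y)\mapsto(Y(\theta_G(x))(y),x)$, naturality in $G$ via $\theta_H(X(h)x)=h\,\theta_G(x)\,h^{-1}$, an explicit inverse, and the two hexagon axioms with trivial associators. You also supply details the paper leaves implicit: that $c_{X,Y}$ is a morphism over $\overline{\mathcal{G}^c}$, naturality in $X$ and $Y$, and the actual hexagon computations.

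One small correction: the second hexagon needs only functoriality of $Z$, because there $y$ is permuted rather than acted on. Step 1 enters only to ensure the morphisms involved live in the slice category.
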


\begin{proof}
Let $\theta:X\to\mathcal{G}^c$ and $\tau:Y\to\mathcal{G}^c$ be $\mathcal{G}$-maps. 
For any $G\in\mathcal{G}_0$, we define the map $\eta_G:X(G)\times Y(G)\to Y(G)\times X(G)$ by $(x,y)\mapsto (Y(\theta_G(x))(y),x)$. For any $g:G\to G'$, the diagram
\[\xymatrix{
X(G)\otimes Y(G)\ar[d]_-{\eta_G}\ar[rr]^-{(X\otimes Y)(g)}&&X(G')\times Y(G')\ar[d]^-{\eta_G'}\\
Y(G)\otimes X(G)\ar[rr]_-{(Y\otimes X)(g)}&&Y(G')\times X(G')}\]
defined by
\[\begin{tikzpicture}[auto,|->]
\node(a) at (0,0) {$(Y(\theta_G(x))(y) ,x)$};
\node(b) at (5.5,0) {$(Y(g\circ\theta_G (x))(y), X(g)(x))$};
\node(c) at (0,1.5) {$(x,y)$};
\node(d) at (5.5,1.5) {$(X(g)(x), Y(g)(y))$};
\node(e) at (5.5,0.5) {$(Y(\theta_{G'} (X(g)(x)))\circ g)(y),X(g)(x))$};
\draw(d)--node {}(e);
\draw(a)--node[swap] {}(b);
\draw(c)--node[swap] {}(a);
\draw(c)--node {}(d);
\end{tikzpicture}\]
is commutative.
Therefore, $\eta:=\{\eta_G\}_{G\in\mathcal{G}_0}$ is a natural transformation from $X\otimes Y$ to $ Y\otimes X$. For each $G\in\mathcal{G}_0$, we define a map $\tau_G$ by $(x,y)\mapsto (y,Y(\theta^{-1}_G(y))(x))$. 
For any $G\in\mathcal{G}_0$, we obtain 

\[\begin{aligned} \eta_G\circ\tau_G(y,x) &=\eta_G(x,Y(\theta_G^{-1}(x))(y))\\&=((Y(\theta_G(x))\circ Y(\theta_G^{-1}(x)))(y),x)\\&=(y,x)\end{aligned}\] 
and 
\[\begin{aligned}\tau_G\circ\eta_G(y,x)&=\tau_G(Y(\theta_G(x))(y),x)\\&=(x,(Y(\theta^{-1}_G(x))\circ Y(\theta_G(x)))(y))\\&=(x,y)
.\end{aligned}\]
Since each component of $\eta$ is an isomorphism, the functors $X\otimes Y$ and $X\otimes Y$ are isomorphic.
By Theorem \ref{monoidal}, $\set^\mathcal{G}/ \mathcal{G}^c$ is a monoidal category. The following conditions $(1), (2)$ and $(3)$ hold for the monoidal structure of $\set^\mathcal{G}/ \mathcal{G}^c$ and $\eta$.

\noindent
$(1)$ For any $X,Y,Z\in \set^\mathcal{G}/ \mathcal{G}^c$, the diagram \[
\begin{tikzcd}[column sep=huge, row sep=huge]
(X\otimes Y)\otimes Z\arrow[r,"\alpha_{X,Y,Z}"]\arrow[d,"\eta_{X,Y}\otimes\id_z"']&X\otimes (Y\otimes Z)\arrow[r,"\eta_{X,Y\otimes Z}"]&
(Y\otimes Z)\otimes X\arrow[d,"\alpha_{Y,Z,X}"]\\
(Y\otimes X)\otimes Z\arrow[r,"\alpha_{Y,X,Z}"']&Y\otimes (X\otimes Z)\arrow[r,"\id_Y\otimes \eta_{X,Z}"']&Y\otimes (Z\otimes X)
\end{tikzcd}
\]
commutes. 

\noindent
$(2)$ For any $X,Y,Z\in \set^\mathcal{G}/ \mathcal{G}^c$, the diagram \[
\begin{tikzcd}[column sep=huge, row sep=huge]
X\otimes (Y\otimes Z)\arrow[r,"\alpha_{X,Y,Z}^{-1}"]\arrow[d,"\id_X\otimes\eta_{Y,Z}"']&(X\otimes Y)\otimes Z\arrow[r,"\eta_{X\otimes Y, Z}"]&
Z\otimes (X\otimes Y)\arrow[d,"\alpha^{-1}_{Z,X,Y}"]\\
X\otimes (Z\otimes Y)\arrow[r,"\alpha_{X,Z,Y}^{-1}"']&(X\otimes Z)\otimes Y\arrow[r," \eta_{X,Z}\otimes \id_Y"']&(Z\otimes X)\otimes Y
\end{tikzcd}
\]
commutes.  \\
Therefore, The category $\set^\mathcal{G}/ \mathcal{G}^c$ is a braided monoidal category. 
\end{proof}

\begin{theorem}\label{ME}
If $\mathcal{G}$ is connected and $z\in\mathcal{G}_0$, then the categories $\set^\mathcal{G}/\mathcal{G}^c$ and $\set^{\mathcal{G}_z}/\mathcal{G}^c_z$ are monoidally equivalent.
\end{theorem}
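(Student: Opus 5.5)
We will use the equivalence $U:\mathcal{G}_z\to\mathcal{G}$ of Lemma \ref{equiv}. Precomposition gives a restriction functor $U^*:\set^\mathcal{G}\to\set^{\mathcal{G}_z}$, and since $U$ is an equivalence of categories, $U^*$ is also an equivalence. The first observation is that $U^*(\mathcal{G}^c)=\mathcal{G}^c\circ U$ is exactly the conjugation monoid $\mathcal{G}_z^c$: its value at the unique object is $\mathcal{G}_z$, and an element $h\in\mathcal{G}_z$ acts by $a\mapsto hah^{-1}$. An equivalence $\set^\mathcal{G}\simeq\set^{\mathcal{G}_z}$ carrying $\overline{\mathcal{G}^c}$ to $\overline{\mathcal{G}^c_z}$ then induces an equivalence of slice categories. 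Concretely, we define
\[\Phi:\set^\mathcal{G}/\mathcal{G}^c\to\set^{\mathcal{G}_z}/\mathcal{G}^c_z,\qquad (\theta:X\to\mathcal{G}^c)\mapsto(\theta_z:X(z)\to\mathcal{G}_z).\]

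Next we show that $\Phi$ is an equivalence by building a quasi-inverse explicitly.

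\textbf{Choice of data.} For each $G\in\mathcal{G}_0$ we choose a morphism $k_G:z\to G$ with $k_z=1_z$; this is possible because $\mathcal{G}$ is connected.

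\textbf{The functor $\Psi$.} Given a $\mathcal{G}_z$-set $A$ with a $\mathcal{G}_z$-map $\sigma:A\to\mathcal{G}_z$, we define a $\mathcal{G}$-set $\Psi A$ as follows.
\begin{itemize}
\item $\Psi A(G)=A$ for every object $G$.
\item A morphism $g:G\to G'$ acts by $k_{G'}^{-1}gk_G\in\mathcal{G}_z$.
\item The structure map is $(\Psi\sigma)_G(a)=k_G\,\sigma(a)\,k_G^{-1}\in\mathcal{G}_G$.
\end{itemize}
Naturality of $\Psi\sigma$ follows by a short conjugation computation, using that $\sigma$ is $\mathcal{G}_z$-equivariant.

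\textbf{The isomorphisms.} We have $\Phi\Psi=\id$ because $k_z=1_z$. For $\Psi\Phi\cong\id$, the components $X(k_G):X(z)\to X(G)$ assemble into a natural isomorphism over $\mathcal{G}^c$. Alternatively, this part can be cited from the general fact that equivalences induce equivalences of slices (Theorem \ref{AG}-style reasoning).

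It remains to make $\Phi$ strong monoidal. Since $\Phi$ is evaluation at $z$, we have
\[\Phi(X\otimes Y)=X(z)\times Y(z)\to\mathcal{G}_z,\qquad (x,y)\mapsto\theta_z(x)\tau_z(y),\]
which is literally $\Phi X\otimes\Phi Y$. Likewise $\Phi(I)$ is the unit $\bullet\mapsto 1_z$. We therefore take the structure maps $\Phi X\otimes\Phi Y\to\Phi(X\otimes Y)$ and $I\to\Phi I$ to be identities. The associators and unitors of Theorem \ref{monoidal} are given componentwise by the canonical bracket-shuffling maps, so the coherence diagrams for a monoidal functor commute trivially. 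Thus $\Phi$ is a strict monoidal functor.

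Finally, by the standard fact that a strong monoidal functor which is an equivalence of underlying categories admits a monoidal quasi-inverse (doctrinal adjunction), $\Phi$ is a monoidal equivalence.

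I expect the main obstacle to be the verification in the second step: the bookkeeping showing that $\Psi\sigma$ is natural, and that the isomorphism $X(k_G)$ commutes with the maps to $\mathcal{G}^c$. The latter needs $\theta_G(X(k_G)x)=k_G\theta_z(x)k_G^{-1}$, which is exactly naturality of $\theta$ at $k_G$. The monoidal part, by contrast, is essentially free because evaluation at $z$ commutes with products.
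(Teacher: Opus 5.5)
Your proposal is correct and is essentially the paper's argument. Restriction along the inclusion $U:\mathcal{G}_z\to\mathcal{G}$, together with precomposition by its quasi-inverse (your $\Psi$, built from the chosen $k_G$, with the structure map twisted by conjugation by $k_G$), gives the equivalence of slices, and the monoidal structure maps are identities. The only difference is that you verify monoidality for $\Phi$ alone and obtain the monoidal quasi-inverse from doctrinal adjunction, whereas the paper asserts the coherence diagrams for both functors and for the unit and counit directly.
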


\begin{proof}
The proof relies on the correspondence established in Proposition \ref{equiv}. Let $R$ be a quasi-inverse of the functor $U$ constructed in Proposition \ref{equiv}. Then the equivalence of categories $\set^\mathcal{G}\simeq \set^{\mathcal{G}_z}$ is given by the following correspondence. 
\[\begin{array}{rccc}
\hat{R}:&\set^\mathcal{G}&\longrightarrow &
\set^{\mathcal{G}_z}\\
 &X&\longmapsto&XR,
\end{array}
\]
\[\begin{array}{rccc}
\hat{U}:&\set^{\mathcal{G}_z}&\longrightarrow &
\set^\mathcal{G}\\
 &X&\longmapsto&XU.
\end{array}
\]
Denote the isomorphisms $\hat{U} \circ \hat{R}\to \id_{\set^\mathcal{G}}$ by $\eta$ and $\hat{R} \circ \hat{U}\to \id_{\set^{\mathcal{G}^z}}$ by $\epsilon$.
Let $F:\set^{\mathcal{G}_z}/\mathcal{G}_z^c\to \set^\mathcal{G}/\mathcal{G}^c$ be the functor defined by the following conditions: 
\begin{enumerate}
\item For an object $\tau:Y\to\mathcal{G}_z^c$, define $F(\tau):=\eta_{\mathcal{G}^c}*\hat{U}(\tau)$.
\item For a morphism $f$, define $F(f):=\hat{U}(f)$.
\end{enumerate}

Define the functor $F:\set^\mathcal{G}/\mathcal{G}^c\to \set^{\mathcal{G}_z}/\mathcal{G}_z^c$ defined by the following conditions: 
\begin{enumerate}
\item For an object $\theta:X\to\mathcal{G}^c$, define $F(\theta):=\hat{R}(\theta)$.
\item For a morphism $f$, define $F(f):=\hat{R}(f)$.
\end{enumerate}

Let $\phi_0:=\id_I$, $\psi_0:=\id_{I'}$ and for each $X,Y\in \set^{\mathcal{G}}/\mathcal{G}^c$, $\phi_{X,Y}:=\id_{F(X)\otimes F(Y)}$, 
for each $Z,W\in \set^{\mathcal{G}_z}/\mathcal{G}_z^c$, $\psi_{Z,W}:=\id_{G(X)\otimes G(Y)}$.

We write $\alpha,r,l$ and $\alpha',r',l'$ to denote the coherent morphisms of $\set^{\mathcal{G}_z}/\mathcal{G}_z^c$, and $\set^\mathcal{G}/\mathcal{G}^c$, respectively.

The following conditions $(1)-(6)$ hold for the monoidal structure of $\set^\mathcal{G}/\mathcal{G}^c$ and $\set^{\mathcal{G}_x}/\mathcal{G}^c_x$, as well as for the functors $F$ and $G$.\\

\noindent
$(1)$ For any $X,Y,Z\in \set^\mathcal{G}/ \mathcal{G}^c$, the diagram 
\[
\begin{tikzcd}[column sep=small, row sep=small]
&(F(X)\otimes F(Y))\otimes F(Z)\arrow[dl,"\phi_{X,Y}\otimes\id_{F(Z)}"']\arrow[dr,"\alpha'_{F(X),F(Y),F(Z)}"]&\\
F(X\otimes Y)\otimes F(Z)\arrow[d,"\phi_{X\otimes Y,Z}"']&&F(X)\otimes (F(Y)\otimes F(Z))\arrow[d,"\id_{F(X)}\otimes\phi_{Y,Z}"]\\
F((X\otimes Y)\otimes Z)\arrow[dr,"F(\alpha_{X,Y,Z})"']&&F(X)\otimes F(Y\otimes Z)\arrow[dl,"\phi_{X,Y\otimes Z}"]\\
&F(X\otimes (Y\otimes Z))&
\end{tikzcd}
\]
\noindent commutes.\\

\noindent
$(2)$ For any $X\in \set^\mathcal{G}/ \mathcal{G}^c$, the diagram 
\[\xymatrix{
I'\otimes F(X)\ar[d]_-{\phi_0\otimes\id_{F(X)}}\ar[r]^-{l'_{F(X)}}&F(X)\\
F(I)\otimes F(X)\ar[r]_-{\phi_{I,X}}&F(I\otimes X)\ar[u]_-{F(l_I)
}}\]
\noindent commutes.\\

\noindent
$(3)$ For any $X\in \set^\mathcal{G}/ \mathcal{G}^c$, the diagram
\[\xymatrix{
F(X)\otimes I'\ar[d]_-{\id_{F(X)}\otimes\phi_0}\ar[r]^-{r'_{F(X)}}&F(X)\\
F(X)\otimes F(I)\ar[r]_-{\phi_{X,I}}&F(X\otimes I)\ar[u]_-{F(r_I)
}}\]
\noindent commutes.\\

\noindent
$(4)$ For any $X,Y,Z\in \set^{\mathcal{G}_z}/ \mathcal{G}_z^c$, the diagram
\[
\begin{tikzcd}[column sep=small, row sep=small]
&(G(X)\otimes G(Y))\otimes G(Z)\arrow[dl,"\psi_{X,Y}\otimes\id_{G(Z)}"']\arrow[dr,"\beta'_{G(X),G(Y),G(Z)}"]&\\
G(X\otimes Y)\otimes G(Z)\arrow[d,"\psi_{X\otimes Y,Z}"']&&G(X)\otimes (G(Y)\otimes G(Z))\arrow[d,"\id_{G(X)}\otimes\psi_{Y,Z}"]\\
G((X\otimes Y)\otimes Z)\arrow[dr,"G(\beta_{X,Y,Z})"']&&G(X)\otimes G(Y\otimes Z)\arrow[dl,"\psi_{X,Y\otimes Z}"]\\
&G(X\otimes (Y\otimes Z))&
\end{tikzcd}
\]
\noindent commutes.\\

\noindent
$(5)$ For any $X\in \set^{\mathcal{G}_z}/ \mathcal{G}_z^c$, the diagram
\[\xymatrix{
I\otimes G(X)\ar[d]_-{\psi_0\otimes\id_{G(X)}}\ar[r]^-{l'_{G(X)}}&G(X)\\
G(I)\otimes G(X)\ar[r]_-{\psi_{I',X}}&G(I\otimes X)\ar[u]_-{G(l_I)
}}\]
\noindent commutes.\\

\noindent
$(6)$ For any $X\in \set^{\mathcal{G}_z}/ \mathcal{G}_z^c$, the diagram
\[\xymatrix{
G(X)\otimes I\ar[d]_-{\id_{G(X)}\otimes\psi_0}\ar[r]^-{r'_{G(X)}}&G(X)\\
G(X)\otimes G(I)\ar[r]_-{\phi_{X,I'}}&G(X\otimes I')\ar[u]_-{G(r_I)
}}\]
\noindent commutes.\\

\noindent Therefore, $F$ and $G$ are monoidal functors.
For $\theta$ and $\eta$, the following conditions $(1)-(4)$ holds. 

\noindent
$(1)$ For any $X,Y\in \set^\mathcal{G}/ \mathcal{G}^c$, the diagram
\[\xymatrix{
(G\circ F)(X)\otimes (G\circ F)(Y)\ar[d]_-{\eta_X\otimes\eta_Y}\ar[rrr]^-{\psi_{F(X),F(Y)}\circ\,G(\phi_{X,Y})}&&&(G\circ F)(X\otimes Y)\ar[d]^-{\eta_{X\otimes Y}}\\
X\otimes Y\ar[rrr]_-{\id_{X\otimes Y}}&&&X\otimes Y
}\]
\noindent commutes.\\

\noindent
$(2)$ The diagram
\[\xymatrix{
I \ar[dr]_-{\id_I}\ar[r]^-{G(\phi_0)}&(G\circ F)(I)\ar[d]^-{\eta_I}\\
{}&I}\]
\noindent commutes.\\
\noindent
$(3)$ For any $X,Y\in \set^{\mathcal{G}_z}/ \mathcal{G}_z^c$, the diagram
\[\xymatrix{
(F\circ G)(X)\otimes (F\circ G)(Y)\ar[d]_-{\epsilon_X\otimes\epsilon_Y}\ar[rrr]^-{\phi_{G(X),G(Y)}\circ\, F(\phi_{X,Y})}&&&(F\circ G)(X\otimes Y)\ar[d]^-{\epsilon_{X\otimes Y}}\\
X\otimes Y\ar[rrr]_-{\id_{X\otimes Y}}&&&X\otimes Y
}\]
\noindent commutes.\\

\noindent
$(4)$ The diagram
\[\xymatrix{
I' \ar[dr]_-{\id_{I'}}\ar[r]^-{F(\psi_0)}&(F\circ G)(I')\ar[d]^-{\epsilon_{I'}}\\
{}&I'}\]
\noindent commutes.\\
Hence, $C$ and $D$ are monoidally equivalent.
\end{proof}

\begin{theorem}\label{Inj}
The Burnside ring of $\mathcal{G}$ can be embedded into the crossed Burnside ring of the $\mathcal{G}$-monoid $S$.  
\end{theorem}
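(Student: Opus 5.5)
The embedding will come from the unit of the monoid. I will define a functor $\iota:\set^{\mathcal{G}}\to\set^{\mathcal{G}}/\overline{S}$ sending a $\mathcal{G}$-set $X$ to the constant-unit map $u_X:X\to\overline{S}$. Its component at $G\in\mathcal{G}_0$ sends every $x\in X(G)$ to $1_{S(G)}$. A $\mathcal{G}$-map $f:X\to Y$ goes to $f$ itself, viewed as a map over $\overline{S}$.

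\textbf{Well-definedness.} Naturality of $u_X$ holds because each $S(g)$ is a monoid homomorphism, so $S(g)(1_{S(G)})=1_{S(G')}$. Equivalently, $u_X$ is the composite $X\to 1\xrightarrow{I}\overline{S}$ with the unit object $I$ of Theorem \ref{monoidal}. Since $u_Y\circ f=u_X$ always holds, $f$ is automatically a morphism in the slice category.

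\textbf{$\iota$ is a strong monoidal functor for $(\times,1)$ and $(\otimes,I)$.} By the definition of $\otimes$ in Theorem \ref{monoidal}, $u_X\otimes u_Y$ is the map $X\times Y\to\overline{S}$ with $(x,y)\mapsto 1_{S(G)}1_{S(G)}=1_{S(G)}$, which is exactly $u_{X\times Y}$. Also $\iota(1)=I$. So we may take identity structure maps, and the coherence diagrams reduce to those of Theorem \ref{monoidal}, because the associators and unitors there are the cartesian ones. The functor $\iota$ also preserves coproducts, since $u_{X\sqcup Y}=u_X\sqcup u_Y$, and in particular preserves isomorphisms. Passing to Grothendieck groups, $\iota$ therefore induces a ring homomorphism $\iota_*:B(\mathcal{G})\to B^c(\mathcal{G},S)$. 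Distributivity on both sides is Lemma \ref{d} and its cartesian analogue.

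\textbf{Injectivity.} This is the step I expect to be the main obstacle. Both $K_0$ groups are free abelian on isomorphism classes of indecomposable objects, so it suffices to show two things:
\begin{itemize}
\item If $X$ is transitive, then $u_X$ is indecomposable in the slice category.
\item If $u_X\cong u_Y$ over $\overline{S}$, then $X\cong Y$.
\end{itemize}
For the first point, a coproduct decomposition of $u_X$ in $\set^{\mathcal{G}}/\overline{S}$ has underlying object a coproduct decomposition of $X$ in $\set^{\mathcal{G}}$, since coproducts in a slice category are computed in the base. For the second point, an isomorphism over $\overline{S}$ is in particular a $\mathcal{G}$-isomorphism. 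Consequently $\iota_*$ maps the basis of $B(\mathcal{G})$, given by isomorphism classes of transitive $\mathcal{G}$-sets, injectively into the basis of $B^c(\mathcal{G},S)$, given by isomorphism classes of transitive objects over $\overline{S}$. Hence $\iota_*$ is injective.

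To make the basis claims rigorous, I would show that every object of either category decomposes uniquely, up to isomorphism and order, into transitive pieces, namely the orbits of $\mathcal{G}\ltimes X$. This relies on Theorem \ref{AG}, which identifies $\set^{\mathcal{G}}/\overline{S}$ with $\set^{\mathcal{G}\ltimes\overline{S}}$, where the Krull--Schmidt-type uniqueness of orbit decompositions is standard. If that route becomes awkward, a fallback is a retraction argument. The forgetful functor $\set^{\mathcal{G}}/\overline{S}\to\set^{\mathcal{G}}$ preserves $\sqcup$ and isomorphisms, so it induces an additive map $K_0(\set^{\mathcal{G}}/\overline{S})\to K_0(\set^{\mathcal{G}})$ whose composite with $\iota_*$ is the identity. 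This gives injectivity directly, and it only requires that the forgetful map be well defined on Grothendieck groups of the additive monoids, which is immediate.
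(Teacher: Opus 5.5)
Your proposal is correct. The construction of $\iota$ matches the paper's proof exactly. So does the check that $u_X\otimes u_Y=u_{X\times Y}$ and $\iota(1)=I$, which lets identity structure maps satisfy the coherence diagrams, and so does the preservation of $\sqcup$.

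You differ from the paper only in the injectivity step. The paper uses precisely your fallback: it introduces the forgetful functor $G:\set^{\mathcal{G}}/\overline{S}\to\set^{\mathcal{G}}$, notes $G\circ\iota=\id_{\set^{\mathcal{G}}}$, and concludes that the induced map on Grothendieck rings is injective. The paper leaves implicit that $G$ preserves coproducts, which you state. In fact $G$ is strictly monoidal, because the underlying $\mathcal{G}$-set of $\theta\otimes\tau$ is $X\times Y$ and $G(I)=1$. Hence $G$ induces a ring homomorphism, and $B(\mathcal{G})$ is even a ring retract of $B^c(\mathcal{G},S)$.

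Your primary basis argument is also valid. It shows in addition that $\iota_*$ carries the canonical basis of transitive $\mathcal{G}$-sets into the canonical basis of transitive objects over $\overline{S}$. The cost is that it needs the unique orbit decomposition in the slice category. If you keep that route, do not cite Theorem \ref{AG} as stated, since the paper states it only for connected $\mathcal{G}$. Instead, decompose $\theta:X\to\overline{S}$ directly along the connected components of $\mathcal{G}\ltimes X$, which is immediate. The retraction argument needs none of this and is the more economical choice.
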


\begin{proof}
Since $\set^\mathcal{G}$ has finite products, it becomes a monoidal category under finite product.  
Let $\alpha,r,l$ be coherent morphisms of $\set^\mathcal{G}$, and let $\alpha',r',l'$ be coherent morphisms of $\set^\mathcal{G}/S$.
Let $F:\set^\mathcal{G}\to \set^\mathcal{G}/S$ be the functor defined by the following conditions: 
\begin{enumerate}
\item For an object $X$, define $F(X)$ to be  the natural transformation whose component at $G$ is the map $F(X)_G:X(G)\to S(G)$ by $x_G\mapsto 1_{s_G}$ .
\item For a morphism $\theta$, define $F(\theta):=\theta$.
\end{enumerate}

Define the functor $G:\set^\mathcal{G}/S\to \set^\mathcal{G}$ defined by the following conditions: 
\begin{enumerate}
\item For an object $f:X\to S$, define $G(f):=X$.
\item For a morphism $\theta$, define $G(\theta):=\theta$.
\end{enumerate}

Let $\phi_0:=\id_I$, and for each $X,Y\in \set^{\mathcal{G}}$, let $\phi_{X,Y}=\id_{F(X)\otimes F(Y)}$.

The following conditions $(1)-(3)$ hold for the monoidal structure of $\set^\mathcal{G}$ and 
$\set^{\mathcal{G}}/X$, as well as for the functors $F$ and $G$.\\
\noindent
$(1)$ For any $X,Y,Z\in \set^\mathcal{G}$, the diagram
\[
\begin{tikzcd}[column sep=small, row sep=small]
&(F(X)\otimes F(Y))\otimes F(Z)\arrow[dl,"\phi_{X,Y}\otimes\id_{F(Z)}"']\arrow[dr,"\alpha'_{F(X),F(Y),F(Z)}"]&\\
F(X\otimes Y)\otimes F(Z)\arrow[d,"\phi_{X\otimes Y,Z}"']&&F(X)\otimes (F(Y)\otimes F(Z))\arrow[d,"\id_{F(X)}\otimes\phi_{Y,Z}"]\\
F((X\otimes Y)\otimes Z)\arrow[dr,"F(\alpha_{X,Y,Z})"']&&F(X)\otimes F(Y\otimes Z)\arrow[dl,"\phi_{X,Y\otimes Z}"]\\
&F(X\otimes (Y\otimes Z))&
\end{tikzcd}
\]
\noindent commutes.\\

\noindent
$(2)$ For any $X\in \set^\mathcal{G}$, the diagram
\[\xymatrix{
I'\otimes F(X)\ar[d]_-{\phi_0\otimes\id_{F(X)}}\ar[r]^-{l'_{F(X)}}&F(X)\\
F(I)\otimes F(X)\ar[r]_-{\phi_{I,X}}&F(I\otimes X)\ar[u]_-{F(l_I)
}}\]
\noindent commutes.\\

\noindent
$(3)$ For any $X\in \set^\mathcal{G}$, the diagram
\[\xymatrix{
F(X)\otimes I'\ar[d]_-{\id_{F(X)}\otimes\phi_0}\ar[r]^-{r'_{F(X)}}&F(X)\\
F(X)\otimes F(I)\ar[r]_-{\phi_{X,I}}&F(X\otimes I)\ar[u]_-{F(r_I)
}}\]
\noindent commutes.\\
\noindent
Therefore, $F$ and $G$ are monoidal functors.
For every an object $X$, we have $(G\circ F)_0(X)=X$. For a morphism $\theta$, we obtain $(G\circ F)_1(\theta)=\theta$.
Therefore, we obtain $G\circ F=\id_{\set^\mathcal{G}}$. Moreover, $F$ preserves coproducts of $\set^\mathcal{G}$.
The ring homomorphism induced by $F$ is injective, so the Burnside ring $B(\mathcal{G})$ can be embedded into the crossed Burnside ring $B^c(\mathcal{G})$. 
\end{proof}

\begin{remark}
Theorem \ref{Inj} corresponds to a generalization of the construction of $\epsilon_{!}$, $\eta_{!}$ in \cite{OY04}(2.10).
\end{remark}

\begin{theorem}\label{BT}
If $\mathcal{G}\cong\coprod^n_{i=1}\mathcal{G}_{[i]}$ is the decomposition of $\mathcal{G}$ into its connected components, then two rings $B^c(\mathcal{G})$ and $\prod_{x\in\pi_0(\mathcal{G})}B^c(\mathcal{G}_x)$ are isomorphic.
\end{theorem}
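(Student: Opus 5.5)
The proof proceeds in two stages. First I reduce to connected components via a product decomposition of the slice category. Then I apply Theorem \ref{ME} to each component.

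\textbf{Stage 1: splitting over components.} Write $\mathcal{G}\cong\coprod_{i=1}^n\mathcal{G}_{[i]}$. A functor out of a coproduct of categories is the same as a tuple of functors, so
\[\set^{\mathcal{G}}\cong\prod_{i=1}^n\set^{\mathcal{G}_{[i]}},\qquad X\mapsto (X|_{\mathcal{G}_{[i]}})_i.\]
The conjugation monoid restricts componentwise. For $x\in\mathcal{G}_{[i]}$, every morphism with source or target $x$ lies in $\mathcal{G}_{[i]}$, so $\mathcal{G}^c|_{\mathcal{G}_{[i]}}=\mathcal{G}_{[i]}^c$. Because the identification is componentwise, it carries slices to products of slices:
\[\set^{\mathcal{G}}/\mathcal{G}^c\cong\prod_i\set^{\mathcal{G}_{[i]}}/\mathcal{G}_{[i]}^c.\]

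I then check the structure under this isomorphism. The tensor product of Theorem \ref{monoidal} is defined objectwise on $\mathcal{G}_0$: on objects $X\times Y$, and on structure maps by $(x,y)\mapsto\theta(x)\tau(y)$. Coproducts of $\mathcal{G}$-sets are likewise computed objectwise. So the isomorphism is strict monoidal and preserves $\sqcup$, and the unit $I$ goes to the tuple of units.

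\textbf{Stage 2: from slices to rings.} Grothendieck rings turn finite products of (additive, monoidal) categories into products of rings. An isomorphism class in the product is a tuple of isomorphism classes. Disjoint union is computed in each factor, so the Grothendieck group is the direct sum of the factors' groups. The multiplication is componentwise and the unit is the tuple of units. This gives
\[B^c(\mathcal{G})\cong\prod_{i=1}^n B^c(\mathcal{G}_{[i]}).\]

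\textbf{Stage 3: each component.} Fix a representative $x_i$ of each class in $\pi_0(\mathcal{G})$. By Theorem \ref{ME}, $\set^{\mathcal{G}_{[i]}}/\mathcal{G}_{[i]}^c$ and $\set^{\mathcal{G}_{x_i}}/\mathcal{G}_{x_i}^c$ are monoidally equivalent. Here $\mathcal{G}_{x_i}^c$ is the conjugation monoid of the group $\mathcal{G}_{x_i}$, so the second category is the category of crossed $\mathcal{G}_{x_i}$-sets, and its Grothendieck ring is $B^c(\mathcal{G}_{x_i})$. An equivalence is additive: it preserves finite coproducts, since these are colimits. A monoidal equivalence therefore induces a ring isomorphism on $K_0$, which gives $B^c(\mathcal{G}_{[i]})\cong B^c(\mathcal{G}_{x_i})$. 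Combining this with Stage 2 yields the theorem. The result does not depend on the choice of $x_i$: conjugation by a morphism $x_i\to x_i'$ identifies the two isotropy groups and their crossed Burnside rings.

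\textbf{Main obstacle.} The hardest step is confirming that Theorem \ref{ME} really supplies a \emph{strong} monoidal equivalence compatible with $\sqcup$, since the equivalence there passes through a quasi-inverse $R$ and the natural isomorphism $\eta$. My approach is to restrict along the inclusion $U:\mathcal{G}_{x_i}\to\mathcal{G}_{[i]}$, that is, to use the functor $\hat{R}$-type map $X\mapsto XU$. This map is strictly monoidal, because both the tensor product and the map $\theta\otimes\tau$ are computed pointwise at $x_i$. It also strictly preserves coproducts. Since it is an equivalence, its inverse is automatically strong monoidal, so only the forward map needs to be checked.
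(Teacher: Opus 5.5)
Your proposal is correct and follows the same route as the paper's proof. You split $\set^{\mathcal{G}}/\mathcal{G}^c$ into the product of the slice categories of the connected components, apply Theorem~\ref{ME} to each factor, and pass to Grothendieck rings. Beyond that, you spell out what the paper leaves implicit (the splitting is strict monoidal and preserves $\sqcup$, and $K_0$ commutes with finite products), and your use of the strictly monoidal, coproduct-preserving restriction $X\mapsto XU$ (with $\mathcal{G}^c U=\mathcal{G}_{x_i}^c$) is a cleaner justification of the needed monoidal equivalence than the paper's construction through the quasi-inverse $R$.
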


\begin{proof}
 By Theorem \ref{ME},We obtain 
\[\begin{aligned}\set^{\mathcal{G}}/\mathcal{G}^c&\cong\prod^n_{i=1} \set^{\mathcal{G}_{[i]}}/\mathcal{G}_{[i]}^c \\& \simeq \prod_{x\in\pi_0(\mathcal{G})} \set^{\mathcal{G}_x}/\mathcal{G}_x^c.
\end{aligned}\]
Taking the Grothendieck ring, we obtain $B^c(\mathcal{G})\cong\prod_{x\in\pi_0(\mathcal{G})} B^c(\mathcal{G}_x)$.
\end{proof}

\begin{remark}
The isomorphism $B^c(\mathcal{G})\cong\prod_{x\in\pi_0(\mathcal{G})}B(\mathcal{G}_x)$ is induced by a non-canonical equivalence of categories.
\end{remark}

\begin{remark}
Although we don't carry out explicit computations of crossed Burnside ring for groupoids here, Theorem \ref{BT} enables us to compute the crossed Burnside ring of a groupoid using methods developed for the crossed Burnside ring of a group. 
\end{remark}

\section*{Acknowledgement}
The author sincerely thanks his supervisor, Professor Fumihito Oda, for his support and valuable comments.


\begin{thebibliography}{99}
\bibitem[AMP20]{AMP20} J. \'{A}vila; V. Mar\'{i}n and H. Pinedo, Isomorphism theorems for groupoids and some applications, International Journal of Mathmatics and Mathematical Sciences, (2020), Art.ID 3967368, 10. MR 4073238.
\bibitem[BD20]{BD20} P. Balmer and I. Dell' Ambrogio, Mackey 2-functors and Mackey 2-motives (European Mathmatical 'Society (EMS), Z\"{u}rich, 2020).
\bibitem[Bou03]{Bou03} S. Bouc. The $p$-blocks of the Mackey algebra. Algebr. Represent. Theory, 6(5): 515-543, 2003.
\bibitem[Bra26]{Bra26} H. Brandt, \"{U}ber eine Verallgemeinerung des Gruppenbegriffes, Math. Ann., \textbf{96} (1926), 360-366.
\bibitem[ES23]{ES23} L. EI Kaoutit and L. Spinosa, "On Burnside Theory for groupoids", Bull. Math. Soc. Sci. Math. Roumanie (2023) Tome \textbf{66}(\textbf{114}), No.1, 41--87.
\bibitem[Iva02]{Iva02} G. Ivan, Algebraic constructions of Brandt Groupoids, Proceeding of the Algebra Symposium, Babes-Bolyai University Cluj, 69-90, 2002.
\bibitem[Wil69]{Wil69} N.H. Williams, On Grothendieck universes. Conpositio Mathematica, tome 21, No.1, (1969), 1-3. 
\bibitem[OY01]{OY01} F. Oda and T. Yoshida. Crossed Burnside rings. I. The fundamental theorem, J. Algebra, \textbf{236} (2001), no. 1, 29-79.
\bibitem[OY04]{OY04} F. Oda and T. Yoshida. Crossed Burnside rings. II. The Dress construction of a Green functor, J. Algebra, \textbf{282} (2004), no. 1, 58-82.
\bibitem[Rog13]{Rog13} B. Rognerud. Equivalences de blocs d'alg\`{e}bres de Mackey. PhD thesis, Universit\'{e} de Picardie Jules Verne, December 2013.
\bibitem[Yos97]{Yos97} T. Yoshida. Crossed $G$-sets and crossed Burnside rings.\\ S\={u}rikaisekikenk\={y}usho K\={o}ky\={u}roku, (991):1-15, 1997. Group theory and combinatorial mathematics (Japanese) (Kyoto, 1996).
\end{thebibliography}
\end{document}